\newtheorem{theorem}{Theorem}[section]
\newtheorem{lemma}[theorem]{Lemma}
\newtheorem{proposition}[theorem]{Proposition}
\newtheorem{corollary}[theorem]{Corollary}
\theoremstyle{definition}
\newtheorem{definition}[theorem]{Definition}
\theoremstyle{remark}
\newtheorem{remark}[theorem]{Remark}
\numberwithin{equation}{section}
\begin{document}
\setcounter{page}{1}

\title{Asymptotic properties of Banach spaces and coarse quotient maps}

\author{Sheng Zhang}

\address{School of Mathematics, Southwest Jiaotong University, Chengdu, Sichuan 611756, China}
\email{\textcolor[rgb]{0.00,0.00,0.84}{sheng@swjtu.edu.cn}}

\subjclass[2010]{Primary 46B80, 46B06.}

\thanks{The author was supported by the Fundamental Research Funds for the Central Universities, Grant Number 2682017CX060}

\begin{abstract}
We give a quantitative result about asymptotic moduli of Banach spaces under coarse quotient maps. More precisely, we prove that if a Banach space $Y$ is a coarse quotient of a subset of a Banach space $X$, where the coarse quotient map is coarse Lipschitz, then the ($\beta$)-modulus of $X$ is bounded by the modulus of asymptotic uniform smoothness of $Y$ up to some constants. In particular, if the coarse quotient map is a coarse homeomorphism, then the modulus of asymptotic uniform convexity of $X$ is bounded by the modulus of asymptotic uniform smoothness of $Y$ up to some constants.
\end{abstract}

\maketitle

\section{Introduction}

The study of asymptotic geometry of Banach spaces dates back to Milman \cite{Milman1971}, in which he introduced two asymptotic properties that are now known as asymptotic uniform convexity and asymptotic uniform smoothness (cf. \cite{JLPS2002diff}). For a Banach space $X$
and $t>0$. The modulus of asymptotic uniform smoothness of $X$ is defined by
$$\bar{\rho}_X(t):=\sup_{x\in S_X}\inf_{{\rm dim}(X/Y)<\infty}\sup_{y\in S_Y}\|x+ty\|-1,$$
and the modulus of asymptotic uniform convexity of $X$ is defined by
$$\bar{\delta}_X(t):=\inf_{x\in S_X}\sup_{{\rm dim}(X/Y)<\infty}\inf_{y\in S_Y}\|x+ty\|-1.$$
A Banach space $X$ is said to be asymptotically uniformly smooth (AUS for short) if $\lim_{t\to0}\bar{\rho}_X(t)/t\to0$ as $t\to0$,
and it is said to be asymptotically uniformly convex (AUC for short) if $\bar{\delta}_X(t)>0$ for all $0<t\le1$.

In close relation to AUC and AUS is Rolewicz's property ($\beta$) that was originally defined using the terminology of ``drop'' \cite{Rolewicz1987}; later Kutzarova \cite{Kutzarova1991} gave an equivalent definition, according to which one can define a modulus for the property: for a Banach space $X$ and $t\in(0,a]$, where $a\in[1,2]$ is a number that depends only on the space $X$, the ($\beta$)-modulus of $X$ is defined by
$$\bar{\beta}_X(t)=1-\sup\left\{\inf_{n\ge 1}
\left\{\frac{\|x-x_n\|}{2}\right\}:~x,x_n\in B_X,~{\rm sep}(\{x_n\}_{n=1}^\infty)\ge t\right\}.$$
Here ${\rm sep}(\{x_n\}_{n=1}^\infty)$ denotes the separating constant of the sequence $\{x_n\}_{n=1}^\infty$:
$$\text{sep}(\{x_n\}_{n=1}^\infty):=\inf\limits_{i\neq j}\|x_i-x_j\|.$$
A Banach space $X$ is said to have property ($\beta$) if $\bar{\beta}_X(t)>0$ for all $t>0$, and for $p\in(1,\infty)$ we say that the ($\beta$)-modulus of $X$ has power type $p$, or $X$ has property ($\beta_p$), if there is a constant $C>0$ so that $\bar{\beta}_X(t)\ge Ct^p$ for all $t>0$.

A reflexive Banach space that is simultaneously AUC and AUS must has property ($\beta$) \cite{Kutzarova1990}. Conversely, if a Banach space $X$ has property ($\beta$), then it must be reflexive and AUC. More precisely, it was shown in \cite{DKR2016} that $\bar{\beta}_X(t)\le\bar{\delta}_X(2t)$ for all $t\in(0,1/2]$. However, property ($\beta$) does not imply AUS isometrically \cite{Kutzarova1990}, but a Banach space with property ($\beta$) admits an equivalent norm that is AUS. A complete renorming argument of property ($\beta$) can be found in the recent paper by Dilworth, Kutzarova, Lancien and Randrianarivony \cite{DKLRbeta}.

\smallskip

Bates, Johnson, Lindenstrauss, Preiss and Schechtman first studied nonlinear quotient maps in the Banach space setting \cite{BJLPS1999}. A map $f:X\to Y$ between two metric spaces $X$ and $Y$ is said to be co-uniformly continuous if for every $\varepsilon>0$, there exists $\delta=\delta(\varepsilon)>0$ such that for all $x\in X$, $$f(B_X(x,\varepsilon))\supseteq B_Y(f(x),\delta).$$
If $\delta$ can be chosen to be $\varepsilon/C$ for some constant $C>0$ that is independent of $\varepsilon$, then $f$ is said to be co-Lipschitz.
A uniform (resp. Lipschitz) quotient map is a map that is both uniform continuous and co-uniform continuous (resp. Lipschitz and co-Lipschitz), and $Y$ is said to be a uniform (resp. Lipschitz) quotient of $X$ if there exists a uniform (resp. Lipschitz) quotient map from $X$ onto $Y$.

Lima and Randrianarivony \cite{LimaR2012} showed that for $q>p>1$, $\ell_q$ is not a uniform quotient of $\ell_p$. Their proof relies on a technical argument called ``fork argument''. On the other hand, Baudier and Zhang \cite{BZ2016} gave a different proof by estimating the $\ell_p$-distortion of the countably branching trees. The two proofs are based on similar ideas that use the quantification of property $(\beta)$. The theorem below, first appeared in \cite{DKLR2014}, is the quantitative version of the Lima-Randrianarivony result.

\begin{theorem}[\cite{DKLR2014}]\label{uqquan}
Let $X,Y$ be two Banach spaces. $S$ is a subset of $X$ and $f:S\to Y$ is a uniform quotient map that is Lipschitz for large distances. Then there exists constant $C>0$ that depends only on the map $f$ such that for all $0<t\le 1$, $$\bar{\beta}_X(Ct)\le\frac{3}{2}\bar{\rho}_Y(t).$$
\end{theorem}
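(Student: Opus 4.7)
The plan is to prove the estimate by directly constructing, for every $\eta>0$, a configuration $x,(x_n)$ in $B_X$ with $\mathrm{sep}(\{x_n\})\ge Ct$ and $\inf_n \|x-x_n\|/2 \ge 1 - \frac{3}{2}\bar{\rho}_Y(t) - \eta$. Once this is done, the very definition of $\bar{\beta}_X(Ct)$ yields $\bar{\beta}_X(Ct) \le \frac{3}{2}\bar{\rho}_Y(t) + \eta$, and letting $\eta\to 0$ finishes the argument. The configuration is produced by lifting an AUS-based ``antipodal fork'' from $Y$ to $X$ via the quotient map $f$.

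First I would work at a sufficiently large scale $R$ so that the Lipschitz-for-large hypothesis on $f$ is in force and, by a standard argument, the co-uniform continuity of $f$ can be upgraded to a co-Lipschitz-type estimate at this scale. Pick $y^*\in S_Y$ and preimages $x^*\in S$ of $Ry^*$ and $x^\dagger\in S$ of $-Ry^*$. By the definition of $\bar{\rho}_Y(t)$, choose a finite-codimensional subspace $Z\subset Y$ with $\|-y^* + tz\|\le 1+\bar{\rho}_Y(t)+\eta$ for every $z\in S_Z$, and select an almost $1$-separated sequence $(z_n)\subset S_Z$. Set $y_n := -Ry^* + Rt z_n$ and lift each $y_n$ to $x_n\in S$ by co-uniform continuity. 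The Lipschitz-for-large bound applied to $\|y_n-y_m\| \ge Rt(1-\eta)$ gives $\mathrm{sep}(\{x_n\})\gtrsim Rt$, while the co-uniform control places each $x_n$ within distance $\lesssim Rt$ of $x^\dagger$.

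The pivotal quantitative inputs are two AUS estimates on the $Y$-side. The upper bound $\|y_n\| \le R(1+\bar{\rho}_Y(t)+\eta)$ is immediate from the chosen $Z$. The lower bound $\|Ry^*-y_n\| = 2R\|y^*-(t/2)z_n\| \ge 2R(1-\bar{\rho}_Y(t/2))$ comes from the triangle inequality $2\|y^*\|\le \|y^*+(t/2)z_n\|+\|y^*-(t/2)z_n\|$ combined with the AUS upper bound on $\|y^*+(t/2)z_n\|$. Transporting these to $X$ via Lipschitz-for-large and normalizing so that the configuration fits in $B_X$, one obtains $\inf_n\|\tilde{x}^*-\tilde{x}_n\|/2 \ge 1 - \bar{\rho}_Y(t/2) - \bar{\rho}_Y(t) - \eta'$ for some $\eta'\to 0$. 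The elementary bound $\bar{\rho}_Y(t/2)\le \frac{1}{2}\bar{\rho}_Y(t)$, which holds because $t\mapsto \|y+tz\|-1$ vanishes at $0$ and is convex (being a norm composed with an affine map), then produces the required $\frac{3}{2}\bar{\rho}_Y(t)$.

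The main obstacle I anticipate is the precise bookkeeping of the Lipschitz-for-large and co-uniform continuity constants of $f$ to secure a single constant $C$ that governs the separation of $(\tilde{x}_n)$ uniformly in $t$ and that absorbs the multiplicative losses appearing in each normalization step. A subsidiary technical point is the upgrade of co-uniform continuity to a co-Lipschitz-type bound at large distances, which is a standard but nontrivial feature of the coarse setting and is needed to control the norms of the lifts $x_n$ in terms of $\|y_n\|$. Once these analytic inputs are in place, the remainder of the argument reduces to triangle-inequality bookkeeping together with the choice of the scale $R$.
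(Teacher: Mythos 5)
Your proposal has a genuine gap in the most delicate part of the argument, namely the lower bound on $\inf_n\|\tilde{x}^*-\tilde{x}_n\|$ after normalization. You derive the lower bound $\|x^*-x_n\|\gtrsim \frac{2R(1-\bar{\rho}_Y(t/2))}{L}$ from the Lipschitz-for-large-distances constant $L$ acting on $\|Ry^*-y_n\|$. But the scale $N$ by which you must normalize to force the configuration into $B_X$ is governed by the \emph{co-Lipschitz} constant: the lifts $x_n$ are controlled only through $\|x_n-x^\dagger\|\lesssim Rt\,c$ and $\|x^*-x^\dagger\|\lesssim 2Rc$, where $c$ is the co-Lipschitz constant at large scale. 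After dividing, the quantity $\inf_n\|\tilde{x}^*-\tilde{x}_n\|/2$ is therefore bounded below only by something of order $\frac{1-\bar{\rho}_Y(t/2)}{Lc}$, not by $1-\frac{3}{2}\bar{\rho}_Y(t)-\eta$. Since $Lc\ge 1$ can be arbitrarily large (and is $>1$ in general; cf.\ Lemma \ref{clowbound}), the estimate $\bar{\beta}_X(Ct)\le 1-\frac{1-\bar{\rho}_Y(t/2)}{Lc}+o(1)$ does \emph{not} tend to $0$ as $\bar{\rho}_Y(t)\to 0$, and the proof breaks down.

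The missing idea is the extremal/witness structure at the heart of the Lima--Randrianarivony fork argument, which the paper's proof of Theorem \ref{mainthm1} (the coarse analogue of the statement) implements. One does not lower-bound $\|z_\varepsilon-z_n\|$ via the Lipschitz constant of $f$; instead one chooses a point $z_\varepsilon\in S$ and a radius $R$ at which the co-Lipschitz constant is nearly attained, producing a $y_\varepsilon$ with $B_Y(y_\varepsilon,K)\cap f(B_S(z_\varepsilon,R(c_\infty(f)-\varepsilon)))=\emptyset$. Then every lift $x_n$ of $y_\varepsilon$ automatically satisfies $\|z_\varepsilon-x_n\|>R(c_\infty(f)-\varepsilon)$, and since $x_n$ is obtained as a co-Lipschitz lift close to $z_n$, one concludes $\|z_\varepsilon-z_n\|\gtrsim \frac{Rc_\infty(f)}{3}(2-\bar{\rho}_Y(t))$. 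This lower bound carries exactly the same $Rc_\infty(f)$ factor as the upper bounds $\|z_n-x\|,\|z_\varepsilon-x\|\lesssim \frac{Rc_\infty(f)}{3}(1+\bar{\rho}_Y(t))$, so the large constants cancel upon normalization and one is left with the clean ratio $\frac{2-\bar{\rho}_Y(t)}{1+\bar{\rho}_Y(t)}$, which yields the $\frac32\bar\rho_Y(t)$ bound. Your ``antipodal fork'' $\pm Ry^*$ and the sublinearity $\bar{\rho}_Y(t/2)\le\frac12\bar{\rho}_Y(t)$ are fine in themselves, but they cannot substitute for the empty-intersection mechanism that makes the constants cancel.
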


The main goal of this paper is to give quantitative results of this kind in the coarse category. It should be note that although property $(\beta)$ is preserved under uniform quotient maps up to renorming (cf. \cite{DKR2016} and \cite{DKLRbeta}), one cannot compare $\bar{\beta}_X$ and $\bar{\beta}_Y$ even if $X$ and $Y$ are uniformly homeomorphic. Indeed, \cite{DKR2016} gave an example of two uniformly homeomorphic Banach spaces one of which has property $(\beta_p)$, $p\in(1,\infty)$, while the other does not admit any equivalent norm with property $(\beta_p)$.

Throughout this article all Banach spaces are real and of infinite dimension. For a metric space $X$, $B_X(x,r)$ denotes the closed ball centered at $x$ with radius $r$. If $X$ is a Banach space, we denote by $B_X$ and $S_X$ its closed unit ball and unit sphere, respectively.

\section{Coarse quotient maps}

A map $f:X\to Y$ between two metric spaces $X$ and $Y$ is said to be coarsely continuous if $\omega_f(t)<\infty$ for all $t>0$, where $$\omega_f(t):=\sup\{d_Y(f(x),f(y)): d_X(x,y)\le t\}$$
is the modulus of continuity of $f$. If $X$ is unbounded, one can define for every $s>0$ the Lipschitz constant of $f$ when distances are at least $s$ by
$$\text{Lip}_s(f):=\sup\left\{\frac{d_Y(f(x),f(y))}{d_X(x,y)}:d_X(x,y)\ge s\right\},$$
then for all $t\ge0$ and $s>0$,
$$\omega_f(t)\le\max\{\omega_f(s),\text{Lip}_s(f)\cdot t\}.$$
Let $$\text{Lip}_\infty(f):=\inf_{s>0}\text{Lip}_s(f)=\lim_{s\to\infty}\text{Lip}_s(f).$$
%The map $f$ is said to be Lipschitz for large distances if $\text{Lip}_s(f)<\infty$ for all $s>0$;
The map $f$ is said to be coarse Lipschitz if $\text{Lip}_\infty(f)<\infty$, or equivalently, if $\text{Lip}_s(f)<\infty$ for some $s>0$.

\medskip

The following notion of coarse quotient map was introduced by Zhang \cite{Zhang}:

\begin{definition}[\cite{Zhang}]\label{cqdef}
Let $X,Y$ be two metric spaces. For a constant $K\ge0$, a map $f:X\to Y$ is said to be co-coarsely continuous with constant $K$ if for every $d>K$, there exists $\delta=\delta(d)>0$ such that for all $x\in X$,
$$f(B_X(x,\delta))^K\supseteq B_Y(f(x),d).$$
Here for a subset $A$ of $X$, $A^K$ denotes the $K$-neighborhood of $A$, that is, $$A^K:=\{x\in X: d_X(x,a)\le K~\text{for some}~a\in A\}.$$
If $f$ is both coarsely continuous and co-coarsely continuous (with constant $K$), then we say $f$ is a coarse quotient map (with constant $K$). $Y$ is said to be a coarse quotient of $X$ if there exists a coarse quotient map from $X$ to $Y$.
\end{definition}

Recall that a metric space $X$ is said to be metrically convex if for every $x_0, x_1\in X$ and $0<\lambda<1$, there is a point $x_\lambda\in X$ such that
$$d(x_0,x_\lambda)=\lambda d(x_0, x_1)\hspace{4mm}\text{and}\hspace{4mm}d(x_1, x_\lambda)=(1-\lambda)d(x_0, x_1).$$
It is well-known that a coarsely continuous map defined on a metrically convex space must be Lipschitz for large distances. Similarly, if the range space of a co-coarsely continuous map with constant $K$ is metrically convex, then the map is ``co-Lipschitz for large distantces with constant $K$'' as stated in the Lemma below.

\begin{lemma}\label{cqLiplarge}
Let $X,Y$ be two metric spaces and assume that $Y$ is metrically convex. If $f:X\to Y$ is a co-coarsely continuous map with constant $K$, then for every $d>K$, there exists $c=c(d,K)>0$ such that for all $x\in X$ and $r\ge d$,
\begin{align}
f(B_X(x,cr))^K\supseteq B_Y(f(x),r).\label{cqLiplargeineq}
\end{align}
\end{lemma}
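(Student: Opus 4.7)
The plan is to prove this by an iteration argument: walk from $f(x)$ to the target point $y\in B_Y(f(x),r)$ along a geodesic-like chain in $Y$ of small steps, and lift each small step back to $X$ using the co-coarse continuity at the previously constructed point.

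First, fix $d>K$ and let $\delta=\delta(d)>0$ be the constant provided by Definition~\ref{cqdef}, so that $f(B_X(x',\delta))^K\supseteq B_Y(f(x'),d)$ for every $x'\in X$. Set the step size $d':=d-K>0$. Given $x\in X$, $r\ge d$, and $y\in B_Y(f(x),r)$, let $\ell:=d_Y(f(x),y)\le r$ and $n:=\lceil \ell/d'\rceil$. Using metric convexity of $Y$ inductively, produce points $y_0=f(x),y_1,\dots,y_n=y$ with $d_Y(y_i,y_{i+1})\le d'$ for $0\le i<n$ (for instance, take $y_i$ so that $d_Y(y_0,y_i)=i\ell/n$ and $d_Y(y_i,y_n)=(n-i)\ell/n$). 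Now construct $x_0=x,x_1,\dots,x_n\in X$ inductively so that $d_X(x_i,x_{i+1})\le\delta$ and $d_Y(f(x_i),y_i)\le K$. Indeed, assuming $x_i$ has been chosen (with $d_Y(f(x_0),y_0)=0$), the triangle inequality gives $d_Y(f(x_i),y_{i+1})\le K+d'=d$, so $y_{i+1}\in B_Y(f(x_i),d)\subseteq f(B_X(x_i,\delta))^K$, which yields a point $x_{i+1}\in B_X(x_i,\delta)$ with $d_Y(f(x_{i+1}),y_{i+1})\le K$.

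At the end $d_X(x,x_n)\le n\delta$ and $d_Y(f(x_n),y)\le K$, so $y\in f(B_X(x,n\delta))^K$. Since $n\le \ell/d'+1\le r/(d-K)+1$ and the hypothesis $r\ge d>d-K$ forces $1\le r/(d-K)$, we get $n\delta\le 2r\delta/(d-K)$, so the constant $c:=2\delta/(d-K)$ (depending only on $d$ and $K$) verifies \eqref{cqLiplargeineq}. The only delicate point in the argument, and the reason it is not a one-line application of the definition, is the choice of step size $d'=d-K$ rather than $d$: this is precisely what ensures that the accumulated error $K$ between $f(x_i)$ and $y_i$ at each stage still leaves $y_{i+1}$ inside the ball $B_Y(f(x_i),d)$ on which co-coarse continuity can be applied. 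Beyond that, the argument is routine metric-space bookkeeping.
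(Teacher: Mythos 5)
Your proof is correct and takes essentially the same route as the paper's: split the segment from $f(x)$ to $y$ into $n$ pieces of length at most $d-K$ using metric convexity, then lift each step back to $X$ via co-coarse continuity, accumulating an error of at most $K$ at each stage. The only cosmetic differences are that you take $n=\lceil\ell/(d-K)\rceil$ depending on the target $y$ (the paper fixes $n=\lfloor r/(d-K)\rfloor+1$) and obtain the constant $c=2\delta/(d-K)$ rather than the paper's $c=\bigl(\tfrac{1}{d-K}+\tfrac{1}{d}\bigr)\delta$; both choices are valid.

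One minor quibble: the parenthetical suggestion to take $y_i$ independently with $d_Y(y_0,y_i)=i\ell/n$ and $d_Y(y_i,y_n)=(n-i)\ell/n$ does not by itself guarantee $d_Y(y_i,y_{i+1})\le\ell/n$, since the intermediate points need not lie on a common geodesic. The correct statement is the inductive one you also invoke: first produce $y_1$ with $d_Y(y_0,y_1)=\ell/n$ and $d_Y(y_1,y_n)=(n-1)\ell/n$, then recurse on the pair $(y_1,y_n)$. The paper glosses over this point in exactly the same way, so it is not a gap relative to the paper's own level of rigor.
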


\begin{proof}
For $x\in X$ and $r\ge d$, let $n:=\lfloor\frac{r}{d-K}\rfloor+1$. Then for every $y\in B_Y(f(x),r)$, $d_Y(y,f(x))\le r<n(d-K)$. By the metric convexity of $Y$, one can find points $\{u_i\}_{i=0}^n$ in $Y$ with $u_0=f(x)$ and $u_n=y$ such that $d_Y(u_i,u_{i-1})<d-K$, $i=1,...,n$. Since $f$ is co-coarsely continuous with constant $K$, we have $$u_1\in B_Y(f(x),d)\subseteq f(B_X(x,\delta))^K,$$
where $\delta=\delta(d)>0$ is given by Definition \ref{cqdef}, so there exists $z_1\in B_X(x,\delta)$ so that $d_Y(u_1,f(z_1))\le K$. This implies, by the triangle inequality, that $u_2\in B_Y(f(z_1),d)$. Again the co-coarse continuity of $f$ guarantees that there is $z_2\in B_X(z_1,\delta)$ that satisfies $d_Y(u_2,f(z_2))\le K$. Repeat the procedure $n$ times we get points $\{z_i\}_{i=0}^n$ in $X$, where $z_0=x$, with the following property: $d_X(z_i,z_{i-1})\le\delta$ and $d_Y(u_i,f(z_i))\le K$, $i=1,...,n$. It follows that $z_n\in B_X(x,n\delta)$, hence $y\in f(B_X(x,n\delta))^K$. Note that $n\le\left(\frac{1}{d-K}+\frac{1}{d}\right)r$, thus \eqref{cqLiplargeineq} follows by putting $c=\left(\frac{1}{d-K}+\frac{1}{d}\right)\delta$.
\end{proof}

\begin{remark} Lemma \ref{cqLiplarge} is an improvement of Lemma 3.2 in \cite{Zhang}, where $d>2K$ is required. Also, For $d>K$, it follows from \eqref{cqLiplargeineq} that the constant $c=c(d,K)>0$ satisfies for all $x\in X$ and $r>0$,
$$f(B_X(x,cr))^d\supseteq B_Y(f(x),r).$$
It means that co-coarsely continuous maps are co-Lipschitz with a slightly larger constant if the range space is metrically convex.
\end{remark}

Under the assumption of Lemma \ref{cqLiplarge}, for $d>K$, let $c_d$ be the infimum of all $c$ that satisfy \eqref{cqLiplargeineq} for all $x\in X$ and $r\ge d$. Then $\{c_d\}_{d>K}$ is non-increasing and bounded below by 0, hence converges. Denote $c_\infty(f):=\inf_{d>K}c_d=\lim_{d\to\infty}c_d$.

\begin{lemma}\label{clowbound}
Let $X,Y$ be two metric spaces and assume that $Y$ is metrically convex and unbounded. If $f:X\to Y$ is a coarse quotient map that is coarse Lipschitz, then $${\rm Lip}_\infty(f)c_\infty(f)\ge1.$$
\end{lemma}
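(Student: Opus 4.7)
The plan is to produce, for any $c > c_d$ (with $d > K$), pairs $(x, z) \in X \times X$ whose ratio $d_Y(f(x), f(z))/d_X(x, z)$ is close to $1/c$, and then to compare this ratio with $\text{Lip}_s(f)$. Since $Y$ is metrically convex and unbounded, for each $r \ge d$ one can choose a point $y \in Y$ with $d_Y(y, f(x)) = r$ by running along a metric-convex path from $f(x)$ toward an arbitrarily distant point and stopping after distance $r$. Lemma \ref{cqLiplarge} then supplies $z \in B_X(x, cr)$ with $d_Y(f(z), y) \le K$, whence
$$d_Y(f(x), f(z)) \ge r - K \quad \text{and} \quad d_X(x, z) \le cr.$$

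Next, the coarse-Lipschitz hypothesis forces $d_X(x, z) \ge s$ once $r$ is large. By the definition of $\omega_f$ together with the inequality $\omega_f(t) \le \max\{\omega_f(s), \text{Lip}_s(f) \cdot t\}$ recorded in Section 2, we have $d_Y(f(x), f(z)) \le \max\{\omega_f(s), \text{Lip}_s(f) \cdot d_X(x, z)\}$. When $r > K + \omega_f(s)$, the lower bound $d_Y(f(x), f(z)) \ge r - K$ beats $\omega_f(s)$, so $d_X(x, z) \ge (r-K)/\text{Lip}_s(f)$, which exceeds $s$ as soon as $r \ge K + s \cdot \text{Lip}_s(f)$. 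In this range the pair $(x,z)$ lies in the domain relevant to $\text{Lip}_s$ and
$$\text{Lip}_s(f) \ge \frac{d_Y(f(x), f(z))}{d_X(x, z)} \ge \frac{r - K}{cr}.$$

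Letting $r \to \infty$ yields $\text{Lip}_s(f) \ge 1/c$; letting $s \to \infty$ gives $\text{Lip}_\infty(f) \ge 1/c$; and since this holds for every $c > c_d$ and every $d > K$, while $c_d \searrow c_\infty(f)$ as $d \to \infty$, we conclude $\text{Lip}_\infty(f) \ge 1/c_\infty(f)$, which is the claimed inequality. The main conceptual obstacle is that Lemma \ref{cqLiplarge} only delivers an upper bound on $d_X(x, z)$; to convert the ratio estimate into a lower bound on $\text{Lip}_s(f)$ one must also push $d_X(x,z)$ above the threshold $s$, and this is precisely where the coarse-Lipschitz hypothesis is used. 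Once both bounds on $d_X(x,z)$ are in hand, the argument reduces to taking the limits in the correct order.
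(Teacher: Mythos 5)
Your proof is correct and follows essentially the same strategy as the paper's: invoke Lemma \ref{cqLiplarge} to extract the co-Lipschitz-type information, combine it with the bound $\omega_f(t)\le\max\{\omega_f(s),{\rm Lip}_s(f)\cdot t\}$, and pass to the limit first in $r$, then in $s$, then over the constants $c$. The only cosmetic difference is that you work with an explicit witness triple $(x,z,y)$ rather than with the chain of ball inclusions $B_Y(f(x),r)\subseteq f(B_X(x,cr))^K\subseteq B_Y(f(x),\omega_f(cr)+K)$ that the paper uses to reach the same scalar inequality $r\le\omega_f(cr)+K$, and you spell out the final passage from $c>c_d$ to $c_\infty(f)$, which the paper leaves implicit.
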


\begin{proof}
Let $f$ be a coarse quotient map with constant $K$. First observe $X$ is unbounded and $\text{Lip}_s(f)>0$ for all $s>0$, since otherwise $Y=f(X)^K$ is bounded.

Now by Lemma \ref{cqLiplarge}, for $d>K$, there exists $c=c(d,K)>0$ such that for all $x\in X$ and $r\ge d$,
$$B_Y(f(x),r)\subseteq f(B_X(x,cr))^K\subseteq B_Y(f(x),\omega_f(cr))^K=B_Y(f(x),\omega_f(cr)+K),$$
and this implies that $r\le\omega_f(cr)+K$. Since $f$ is coarse Lipschitz, let $s>0$ be such that $0<\text{Lip}_s(f)<\infty$. Then for $t\ge s$ one has
$$r\le\omega_f(cr)+K\le\max\{\omega_f(t),\text{Lip}_t(f)\cdot cr\}+K.$$ Choose large $r$ so that $\text{Lip}_t(f)\cdot cr>\omega_f(t)$, then $r\le\text{Lip}_t(f)\cdot cr+K$, so $$\text{Lip}_t(f)\cdot c\ge\frac{r-K}{r}.$$
Let $r\to\infty$, it follows that $\text{Lip}_t(f)\cdot c\ge1$, and then we finish the proof by letting $t\to\infty$.

\end{proof}

\section{Quantitative results under coarse quotient maps}

Before stating our main theorem, we need the following alternative definition for the modulus of AUS that may be known to experts, but we still give a proof here since we could not find one in the literature.

\begin{proposition}\label{AUSdef}
Let $X$ be a Banach space and $0<t\le1$. Then
\begin{align}\label{AUSmod}
\bar{\rho}_X(t)=\sup_{x\in B_X}\inf_{{\rm dim}(X/Y)<\infty}\sup_{y\in S_Y}\|x+ty\|-1
\end{align}
\end{proposition}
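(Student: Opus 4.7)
The inequality $\bar{\rho}_X(t) \le \sup_{x \in B_X} \inf_{\dim(X/Y)<\infty} \sup_{y\in S_Y} \|x+ty\| - 1$ is immediate from the inclusion $S_X \subseteq B_X$, so the task is to establish the reverse. Setting
$$\Phi(x) := \inf_{\dim(X/Y)<\infty}\sup_{y\in S_Y}\|x+ty\|$$
for $x \in X$, the goal becomes $\Phi(x) \le 1 + \bar{\rho}_X(t)$ for every $x \in B_X$. The plan is to prove that $\Phi$ is convex on $X$, and then to exhibit each element of $B_X$ as a convex combination of two elements of $S_X$.

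To establish convexity, I would fix $x_1, x_2 \in X$, $\lambda \in [0,1]$, and $\varepsilon > 0$, then choose finite-codimensional subspaces $Y_1, Y_2$ with $\sup_{y \in S_{Y_i}}\|x_i + ty\| \le \Phi(x_i) + \varepsilon$ for $i=1,2$, and set $Y := Y_1 \cap Y_2$, which is again finite-codimensional. For every $y \in S_Y$, the triangle inequality gives
$$\|\lambda x_1 + (1-\lambda)x_2 + ty\| \le \lambda\|x_1 + ty\| + (1-\lambda)\|x_2 + ty\| \le \lambda\Phi(x_1) + (1-\lambda)\Phi(x_2) + \varepsilon.$$
Taking the supremum over $y \in S_Y$ and letting $\varepsilon \to 0$ yields the convexity of $\Phi$.

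For the decomposition, let $x \in B_X$. If $\|x\| = 1$ there is nothing to prove, since $\Phi(x) - 1 \le \bar{\rho}_X(t)$ holds directly by definition. If $\|x\| < 1$, I would fix any $u \in S_X$ and use that the convex function $s \mapsto \|x + su\|$ equals $\|x\| < 1$ at $s = 0$ and tends to $\infty$ as $|s| \to \infty$, so there exist $s_- < 0 < s_+$ with $\|x + s_\pm u\| = 1$. Writing $x = \mu(x + s_+ u) + (1-\mu)(x + s_- u)$ for $\mu := -s_-/(s_+ - s_-) \in (0,1)$ and invoking the convexity of $\Phi$ then gives
$$\Phi(x) \le \max\bigl(\Phi(x + s_+ u),\, \Phi(x + s_- u)\bigr) \le \sup_{z \in S_X}\Phi(z) = 1 + \bar{\rho}_X(t),$$
completing the argument.

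The only step requiring a moment's care is the convexity of $\Phi$, which rests on the observation that the intersection of two finite-codimensional subspaces is again finite-codimensional; this lets one approximate the two infima defining $\Phi(x_1)$ and $\Phi(x_2)$ on a common subspace. Everything else reduces to standard convexity together with the intermediate value theorem.
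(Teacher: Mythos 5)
Your argument is correct, but it takes a genuinely different route from the paper's. The paper first establishes (via a supporting functional from Hahn--Banach) that $\lambda\mapsto\max\{\|\lambda x+y\|,\|\lambda x-y\|\}$ is nondecreasing on $(0,\infty)$, then exploits the symmetry of $S_Y$ to write $\sup_{y\in S_Y}\|x+ty\|=\sup_{y\in S_Y}\max\{\|x+ty\|,\|x-ty\|\}$, and finally applies the monotonicity to compare $x\in B_X$ directly with $x/\|x\|\in S_X$ inside the inner supremum, before any infimum or outer supremum is taken. You instead treat the whole quantity $\Phi(x)=\inf_{\dim(X/Y)<\infty}\sup_{y\in S_Y}\|x+ty\|$ as a function of $x$ and prove it is convex (the key observation being that the intersection of two finite-codimensional subspaces is finite-codimensional, so the two infima can be approximated on a common $Y$), then write any $x$ in the open ball as a convex combination of two points of $S_X$ along an arbitrary direction $u$ via the intermediate value theorem. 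Your approach is slightly more structural: it isolates a reusable property (convexity of the ``asymptotic upper norm'' functional $\Phi$), avoids Hahn--Banach and avoids invoking the symmetry of $S_Y$, at the small cost of needing the finite-codimensional intersection lemma. Both are clean; the paper's is more pointwise and elementary in its toolkit, yours is more global and perhaps more transparent about where convexity enters.
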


\begin{proof}
First we show that for every $x,y\in X$ the function $$f(\lambda)=\max\{\|\lambda x+y\|,\|\lambda x-y\|\}$$ is nondecreasing on $(0,\infty)$. Let $0<\lambda_1<\lambda_2$, we may assume that $\|\lambda_1 x+y\|\ge\|\lambda_1 x-y\|$ and let $x^*\in S_{X^*}$ be such that $x^*(\lambda_1 x+y)=\|\lambda_1 x+y\|$. Then $x^*(x)\ge0$, since otherwise
$$\|\lambda_1 x+y\|\ge\|\lambda_1 x-y\|\ge(-x^*)(\lambda_1 x-y)>x^*(\lambda_1 x+y)=\|\lambda_1 x+y\|.$$
Therefore, $f(\lambda_1)=x^*(x)\lambda_1+x^*(y)\le x^*(x)\lambda_2+x^*(y)\le\|\lambda_2 x+y\|\le f(\lambda_2)$.

\smallskip

Now we prove \eqref{AUSmod}. Let $0<t\le1$. If $x=0$ then
$$\inf_{{\rm dim}(X/Y)<\infty}\sup_{y\in S_Y}\|x+ty\|=t\le\bar{\rho}_X(t)+1.$$
For $x\in B_X\backslash\{0\}$ one has
\begin{align*}
\inf_{{\rm dim}(X/Y)<\infty}\sup_{y\in S_Y}\|x+&ty\|=\inf_{{\rm dim}(X/Y)<\infty}\sup_{y\in S_Y}\max\{\|x+ty\|,\|x-ty\|\}\\
&\le\inf_{{\rm dim}(X/Y)<\infty}\sup_{y\in S_Y}\max\left\{\left\|\frac{x}{\|x\|}+ty\right\|,\left\|\frac{x}{\|x\|}-ty\right\|\right\}\\
&=\inf_{{\rm dim}(X/Y)<\infty}\sup_{y\in S_Y}\left\|\frac{x}{\|x\|}+ty\right\|\\
&\le\bar{\rho}_X(t)+1,
\end{align*}
thus \eqref{AUSmod} follows.
\end{proof}

\begin{theorem}\label{mainthm1}
Let $X,Y$ be two Banach spaces. $S$ is a subset of $X$ and $f:S\to Y$ is a coarse quotient map that is coarse Lipschitz. Then for all $0<t\le 1$, $$\bar{\beta}_X\left(\frac{t}{48{\rm Lip}_\infty(f)c_\infty(f)}\right)\le\frac{3}{2}\bar{\rho}_Y(t).$$
\end{theorem}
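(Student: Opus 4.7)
The plan is to adapt the proof of Theorem~\ref{uqquan} from the uniform to the coarse setting, with Lemma~\ref{cqLiplarge} furnishing the coarse-Lipschitz substitute for co-Lipschitzness, Lemma~\ref{clowbound} providing the lower bound $\text{Lip}_\infty(f)\, c_\infty(f) \geq 1$, and Proposition~\ref{AUSdef} allowing the AUS modulus to be applied at arbitrary points of $B_Y$. Fix $0 < t \leq 1$ and $\varepsilon > 0$, choose $L > \text{Lip}_\infty(f)$ and $C > c_\infty(f)$ with $LC$ close to $\text{Lip}_\infty(f)\, c_\infty(f)$, translate so that $0 \in S$ and $f(0) = 0$, and set $\tau := t/(48LC)$. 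The goal is to exhibit witnesses $u \in B_X$ and a sequence $\{u_n\} \subseteq B_X$ with $\text{sep}(\{u_n\}) \geq \tau$ and $\inf_n \|u - u_n\|/2 \geq 1 - \tfrac{3}{2}\bar\rho_Y(t) - O(\varepsilon)$; this yields $\bar\beta_X(\tau) \leq \tfrac{3}{2}\bar\rho_Y(t)$ upon letting $\varepsilon \to 0$.

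To build the witnesses, I first work on the $Y$-side. Fix $y_0 \in S_Y$ and a norming functional $x^* \in S_{Y^*}$ with $x^*(y_0) = 1$. A key observation is that the infimum in $\bar\rho_Y(t)$ over finite-codimensional subspaces of $Y$ may be restricted to those lying in $\ker(x^*)$, since any finite-codimensional $Z \subseteq Y$ can be intersected with $\ker(x^*)$ without increasing the supremum $\sup_{z \in S_Z}\|y_0 + tz\|$. This produces a finite-codimensional $Z \subseteq \ker(x^*)$ with $\sup_{z \in S_Z}\|y_0 + tz\| \leq 1 + \bar\rho_Y(t) + \varepsilon$, and by the Riesz lemma a $(1-\varepsilon)$-separated sequence $\{z_n\} \subseteq S_Z$. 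For a large scaling $R > 0$, consider in $Y$ the near-antipodal configuration $y^* := R y_0$ and $y_n^* := -R y_0 + Rt z_n$. Since $x^*$ norms $y_0$ and vanishes on $Z$, one has $\|y^* - y_n^*\| \geq x^*(2R y_0 - Rt z_n) = 2R$; moreover $\|y_n^*\| \leq R(1 + \bar\rho_Y(t) + \varepsilon)$ and $\text{sep}(\{y_n^*\}) \geq Rt(1-\varepsilon)$. Applying Lemma~\ref{cqLiplarge} starting from $0 \in S$, lift these to $s^*, s_n \in S$ with $\|s^*\| \leq CR + O(1)$, $\|s_n\| \leq CR(1 + \bar\rho_Y(t) + \varepsilon) + O(1)$, and $\|f(s^*) - y^*\|, \|f(s_n) - y_n^*\| \leq K$.

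The coarse Lipschitz property then yields $\|s^* - s_n\| \geq (2R - O(1))/L$ and $\|s_n - s_m\| \geq (Rt(1-\varepsilon) - O(1))/L$. Normalizing by $M := CR(1 + \bar\rho_Y(t) + \varepsilon)$ and letting $R \to \infty$ to absorb the $O(1)$ errors, the points $u := s^*/M$ and $u_n := s_n/M$ lie in $B_X$, with separation on the order of $t/(LC(1+\bar\rho_Y(t)))$ --- comfortably exceeding $\tau = t/(48LC)$ --- and $\|u - u_n\| \geq 2/(LC(1 + \bar\rho_Y(t)))$.

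The principal difficulty is converting this last bound into the target $\|u - u_n\| \geq 2\bigl(1 - \tfrac{3}{2}\bar\rho_Y(t)\bigr)$: the two match straightforwardly only in the extremal case $LC \approx 1$ permitted by Lemma~\ref{clowbound}, while $LC$ may in general be considerably larger. Closing this gap presumably requires a more delicate construction --- for example, iterating the lifting procedure to amplify the antipodal geometry, or distributing the slacks between the $L$- and $C$-estimates across multiple stages --- and the constant $48$ (in $\tau$) together with the constant $\tfrac{3}{2}$ should emerge from an explicit tracking of constants at this refinement step, which is where I expect the bulk of the technical work to reside.
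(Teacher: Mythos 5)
Your skeleton is right up to the final step, and you have correctly identified that it is exactly the final step that fails. The reason your argument stalls is that the antipodal configuration $y^* = Ry_0$, $y_n^* = -Ry_0 + Rtz_n$ forces you to pay the lifting constant $C$ twice and the Lipschitz constant $L$ once independently: the outer radius you normalize by is $\approx CR(1+\bar\rho_Y(t))$, while the only lower bound you have on $\|s^*-s_n\|$ comes from $\|y^*-y_n^*\| \geq 2R$ through the coarse Lipschitz upper bound, giving $\geq 2R/L$. Taking the ratio leaves an irreducible factor $1/(LC)$ that does not cancel, so the target $1-\tfrac{3}{2}\bar\rho_Y(t)$ is out of reach whenever $LC$ is appreciably larger than $1$. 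No amount of iteration or redistribution of slack will remove this: as long as the lower bound on $\|s^* - s_n\|$ is obtained by pushing the $Y$-distance through $\mathrm{Lip}$, the $LC$ factor persists.

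The missing idea is the ``fork argument'' (in the sense of Lima--Randrianarivony), which the paper implements via the \emph{near-optimality} of $c_\infty(f)$ rather than its mere finiteness. Since $c_\infty(f)-\varepsilon < c_d$, there exist $z_\varepsilon \in S$ and $R \geq d$ such that the ball $B_Y(f(z_\varepsilon),R)$ is \emph{not} covered by $f\bigl(B_S(z_\varepsilon,R(c_\infty(f)-\varepsilon))\bigr)^K$; take a witness $y_\varepsilon$ with $B_Y(y_\varepsilon,K)\cap f\bigl(B_S(z_\varepsilon,R(c_\infty(f)-\varepsilon))\bigr)=\emptyset$. Trisect the segment $[f(z_\varepsilon),y_\varepsilon]$ at $m$ and $M$, lift $m$ to a base point $x$, place the perturbations $y_n := M + \tfrac{tR}{3}e_n$ (for a basic sequence $(e_n)$ in the finite-codimensional subspace from Proposition~\ref{AUSdef}), lift each $y_n$ to $z_n$ near $x$, and then lift $y_\varepsilon$ again from each $f(z_n)$ to a point $x_n$ near $z_n$. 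The crucial payoff is that the emptiness relation gives $\|z_\varepsilon - x_n\| > R(c_\infty(f)-\varepsilon)$, so the lower bound on $\|z_\varepsilon - z_n\|$ is of size $\approx R c_\infty(f)(2-\bar\rho_Y(t))/3$, i.e.\ it also carries the factor $c_\infty(f)$. When you center at $x$ and normalize by the outer radius $\approx Rc_\infty(f)(1+\bar\rho_Y(t))/3$, the $c_\infty(f)$'s cancel and you are left with $1 - \tfrac{1}{2}\cdot\tfrac{2-\bar\rho_Y(t)}{1+\bar\rho_Y(t)} = \tfrac{3\bar\rho_Y(t)}{2(1+\bar\rho_Y(t))} \le \tfrac{3}{2}\bar\rho_Y(t)$. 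Only the separation bound $\|z_n-z_k\| > tR/(24\,\mathrm{Lip}_\infty(f))$ involves $\mathrm{Lip}_\infty(f)$, and dividing by the outer radius produces the constant $48\,\mathrm{Lip}_\infty(f)c_\infty(f)$. So the constants $48$ and $\tfrac{3}{2}$ do not emerge from a refinement of your lifting; they emerge from choosing $z_\varepsilon$ to be a near-extremal point for $c_\infty(f)$ so that the co-Lipschitz constant appears on \emph{both} sides of the final ratio and cancels.

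One smaller point: your reduction to finite-codimensional subspaces inside $\ker(x^*)$ is unnecessary; Proposition~\ref{AUSdef} is invoked precisely so that the AUS modulus can be applied at the non-unit vector $(M-m)\cdot \tfrac{3}{R}$, and the basic sequence $(e_n)$ in $S_Z$ already furnishes the separated family (with $\|e_n - e_k\| > 1/2$ from the basis constant), without needing Riesz's lemma.
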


\begin{proof}
Since $f:S\to Y$ is a coarse quotient map that is coarse Lipschitz, it follows from Lemma \ref{clowbound} that $0<\text{Lip}_\infty(f)<\infty$. Choose $s>0$ such that $\text{Lip}_s(f)<2\text{Lip}_\infty(f)$. For $0<t\le1$, one has $0\le\bar{\rho}_Y(t)\le t\le1$. Let $\varepsilon>0$ be small so that
$$\varepsilon<\min\left\{\frac{1}{2}, \frac{2-\bar{\rho}_Y(t)}{6\text{Lip}_\infty(f)+2}, c_\infty(f)\right\},$$
and choose large $d$ that satisfies
$$d>\max\left\{\frac{3K}{\varepsilon},\frac{12(2K+\omega_f(s))}{t}\right\}\hspace{4mm}\text{and}\hspace{4mm}c_{d/3}<c_\infty(f)+\varepsilon.$$
Since $c_\infty(f)-\varepsilon<c_d$, there exist $z_\varepsilon\in S$ and $R\ge d$ such that
$$B_Y(f(z_\varepsilon),R)\nsubseteq f(B_S(z_\varepsilon,R(c_\infty(f)-\varepsilon))^K,$$
so there is $y_\varepsilon\in Y$ with $0<\|y_\varepsilon-f(z_\varepsilon)\|\le R$ such that
\begin{align}\label{emptyintersection}
B_Y(y_\varepsilon,K)\cap f(B_S(z_\varepsilon,R(c_\infty(f)-\varepsilon))=\emptyset.
\end{align}

Now cut the line segment $[y_\varepsilon,f(z_\varepsilon)]$ into three equal pieces, namely, let $m,M\in Y$ be such that $m-f(z_\varepsilon)=M-m=y_\varepsilon-M$, then
$$m\in B_Y\left(f(z_\varepsilon),\frac{R}{3}\right)\subseteq f\left(B_S\left(z_\varepsilon,\frac{R}{3}(c_\infty(f)+\varepsilon)\right)\right)^K,$$
so there is $x\in S$ such that
$$\|x-z_\varepsilon\|\le\frac{R}{3}(c_\infty(f)+\varepsilon)\hspace{4mm}\text{and}\hspace{4mm}\|m-f(x)\|\le K.$$

By the definition of $\bar{\rho}_Y(t)$ (Proposition \ref{AUSdef}), there exists a finite-codimentional subspace $Z$ of $Y$ so that
$$\sup_{z\in S_Z}\left\|M-m+\frac{tR}{3}z\right\|<\frac{R}{3}(1+\bar{\rho}_Y(t)+\varepsilon).$$
Set $y_n:=M+\frac{tR}{3}e_n$, where $(e_n)$ is a basic sequence in $S_Z$ with basis constant less than $2$. Then
$$\|y_n-m\|=\left\|M-m+\frac{tR}{3}e_n\right\|<\frac{R}{3}(1+\bar{\rho}_Y(t)+\varepsilon),$$
and by the triangle inequality,
$$\|y_n-f(x)\|<\frac{R}{3}(1+\bar{\rho}_Y(t)+\varepsilon)+K<\frac{R}{3}(1+\bar{\rho}_Y(t)+2\varepsilon).$$
Thus
$$y_n\in B_Y\left(f(x),\frac{R(1+\bar{\rho}_Y(t)+2\varepsilon)}{3}\right)\subseteq
f\left(B_S\left(x,\frac{R(1+\bar{\rho}_Y(t)+2\varepsilon)(c_\infty(f)+\varepsilon)}{3}\right)\right)^K,$$
so there exists $z_n\in S$ such that
$$\|z_n-x\|\le\frac{R}{3}(1+\bar{\rho}_Y(t)+2\varepsilon)(c_\infty(f)+\varepsilon)\hspace{4mm}\text{and}\hspace{4mm}\|y_n-f(z_n)\|\le K.$$
Note that
$$\|y_\varepsilon-y_n\|=\left\|y_\varepsilon-M-\frac{tR}{3}e_n\right\|
=\left\|M-m-\frac{tR}{3}e_n\right\|<\frac{R}{3}(1+\bar{\rho}_Y(t)+\varepsilon),$$
so again by the triangle inequality,
$$\|y_\varepsilon-f(z_n)\|<\frac{R}{3}(1+\bar{\rho}_Y(t)+\varepsilon)+K<\frac{R}{3}(1+\bar{\rho}_Y(t)+2\varepsilon),$$
hence
$$y_\varepsilon\in B_Y\left(f(z_n),\frac{R(1+\bar{\rho}_Y(t)+2\varepsilon)}{3}\right)\subseteq
f\left(B_S\left(z_n,\frac{R(1+\bar{\rho}_Y(t)+2\varepsilon)(c_\infty(f)+\varepsilon)}{3}\right)\right)^K,$$
and this gives $x_n\in S$ that satisfies
$$\|x_n-z_n\|\le\frac{R}{3}(1+\bar{\rho}_Y(t)+2\varepsilon)(c_\infty(f)+\varepsilon)\hspace{4mm}\text{and}\hspace{4mm}
\|y_\varepsilon-f(x_n)\|\le K.$$
On the other hand, in view of \eqref{emptyintersection}, one has $\|z_\varepsilon-x_n\|>R(c_\infty(f)-\varepsilon)$, so
\begin{align*}
\|z_\varepsilon-z_n\|
&\ge\|z_\varepsilon-x_n\|-\|x_n-z_n\|\\
&>R(c_\infty(f)-\varepsilon)-\frac{R}{3}(1+\bar{\rho}_Y(t)+2\varepsilon)(c_\infty(f)+\varepsilon)\\
&=\frac{R}{3}(c_\infty(f)+\varepsilon)\left(\frac{3(c_\infty(f)-\varepsilon)}{c_\infty(f)+\varepsilon}-1-\bar{\rho}_Y(t)-2\varepsilon\right)\\
&\ge\frac{R}{3}(c_\infty(f)+\varepsilon)\left(3\left(1-\frac{2\varepsilon}{c_\infty(f)}\right)-1-\bar{\rho}_Y(t)-2\varepsilon\right)\\
&\ge\frac{R}{3}(c_\infty(f)+\varepsilon)(2-\bar{\rho}_Y(t)-(6\text{Lip}_\infty(f)+2)\varepsilon)>0.
\end{align*}

For $n,k\in\mathbb{N}$ with $n\neq k$,
$$\|y_n-y_k\|=\frac{tR}{3}\|e_n-e_k\|>\frac{tR}{6}>\omega_f(s)+2K,$$
and also note that
\begin{align*}
\|y_n-y_k\|&\le\|y_n-f(z_n)\|+\|f(z_n)-f(z_k)\|+\|y_k-f(z_k)\|\\
&\le2K+\omega_f(\|z_n-z_k\|),
\end{align*}
so $\omega_f(\|z_n-z_k\|)>\omega_f(s)$, thus $\|z_n-z_k\|>s$. It follows that
\begin{align*}
\frac{tR}{6}<\|y_n-y_k\|&\le\|y_n-f(z_n)\|+\|f(z_n)-f(z_k)\|+\|y_k-f(z_k)\|\\
&\le2K+\text{Lip}_s(f)\|z_n-z_k\|\\
&<\frac{tR}{12}+2\text{Lip}_\infty(f)\|z_n-z_k\|,
\end{align*}
hence $\|z_n-z_k\|>tR/24\text{Lip}_\infty(f)$.

\bigskip

In summary, for $n,k\in\mathbb{N}$ with $n\neq k$ we have the following:
$$\|z_n-z_k\|>\frac{tR}{24\text{Lip}_\infty(f)},~
\|z_\varepsilon-z_n\|>\frac{R(c_\infty(f)+\varepsilon)(2-\bar{\rho}_Y(t)-(6\text{Lip}_\infty(f)+2)\varepsilon)}{3},$$
$$\|z_{\varepsilon}-x\|\le\frac{R}{3}(c_\infty(f)+\varepsilon),\hspace{5mm}
\|z_n-x\|\le\frac{R}{3}(c_\infty(f)+\varepsilon)(1+\bar{\rho}_Y(t)+2\varepsilon).$$
Since
$$\frac{t}{48\text{Lip}_\infty(f)c_\infty(f)}
\le\frac{tR}{24\text{Lip}_\infty(f)}\cdot\frac{1}{\frac{R}{3}(c_\infty(f)+\varepsilon)(1+\bar{\rho}_Y(t)+2\varepsilon)}
\le\frac{t}{8\text{Lip}_\infty(f)c_\infty(f)}$$
and
$$1-\frac{1}{2}\cdot\frac{\frac{R}{3}(c_\infty(f)+\varepsilon)(2-\bar{\rho}_Y(t)-(6\text{Lip}_\infty(f)+2)\varepsilon)}
{\frac{R}{3}(c_\infty(f)+\varepsilon)(1+\bar{\rho}_Y(t)+2\varepsilon)}
\le\frac{3}{2}\bar{\rho}_Y(t)+(3\text{Lip}_\infty(f)+3)\varepsilon,$$
it follows from the definition of $(\beta)$-modulus that
$$\bar{\beta}_X\left(\frac{t}{48\text{Lip}_\infty(f)c_\infty(f)}\right)\le\frac{3}{2}\bar{\rho}_Y(t)+(3\text{Lip}_\infty(f)+3)\varepsilon.$$
The proof is complete by letting $\varepsilon\to0$.

\end{proof}

It is easy to compute that for $1<p<\infty$ and $0<t\le1$,
\begin{align*}
&\bar{\delta}_{\ell_p}(t)=\bar{\rho}_{\ell_p}(t)=(1+t^p)^{\frac{1}{p}}-1,\\
&\bar{\delta}_{c_0}(t)=\bar{\rho}_{c_0}(t)=0,
\end{align*}
and since $\ell_p$ has property ($\beta_p$) (see \cite{betaMod} for the explicit formula of ($\beta$)-modulus of $\ell_p$), we can recover the main result of \cite{Zhang} as an immediate consequence of Theorem \ref{mainthm1}.

\begin{corollary}\mbox{}\par
\begin{enumerate}[(i)]
\item $\ell_q$ is not a coarse quotient of $\ell_p$ for $1<p<q<\infty$.
\item $c_0$ is not a coarse quotient of any Banach space with property ($\beta$).
\end{enumerate}
\end{corollary}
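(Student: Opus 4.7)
The plan is to deduce both items directly from Theorem \ref{mainthm1}, combining it with the explicit values of $\bar{\rho}_{\ell_q}$ and $\bar{\rho}_{c_0}$ recorded just above together with the fact that $\ell_p$ has property $(\beta_p)$. The preliminary observation needed is that in both cases the source of the hypothesized coarse quotient is a Banach space, hence metrically convex, so a coarsely continuous map out of it is automatically Lipschitz for large distances (as already noted in the paper), and in particular coarse Lipschitz. This places the hypotheses of Theorem \ref{mainthm1} in force with $S$ equal to the whole source space, producing a constant $c=c(f)>0$ such that
$$\bar{\beta}_X(ct)\le \tfrac{3}{2}\bar{\rho}_Y(t)\quad\text{for all } 0<t\le 1.$$

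For (i), I would argue by contradiction: suppose a coarse quotient map $f:\ell_p\to\ell_q$ existed. The concavity of $u\mapsto u^{1/q}$ yields $\bar{\rho}_{\ell_q}(t)=(1+t^q)^{1/q}-1\le t^q/q$ on $(0,1]$, and property $(\beta_p)$ of $\ell_p$ supplies a constant $K_p>0$ with $\bar{\beta}_{\ell_p}(s)\ge K_p s^p$. Substituting both estimates into the inequality above gives $K_p c^p\, t^p\le (3/(2q))\,t^q$ for every $0<t\le 1$, which forces $t^{q-p}$ to be bounded below by a positive constant on $(0,1]$. Since $q>p$, letting $t\to 0^+$ produces the desired contradiction.

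For (ii) the argument is essentially free: if a Banach space $X$ with property $(\beta)$ admitted $c_0$ as a coarse quotient, then $\bar{\rho}_{c_0}\equiv 0$ would force $\bar{\beta}_X(ct)=0$ for every $0<t\le 1$, flatly contradicting property $(\beta)$. I do not foresee any real obstacle beyond the preliminary observation about automatic coarse Lipschitzness; once Theorem \ref{mainthm1} applies, both items reduce to an elementary comparison of power functions near $t=0$ in case (i), and to the vacuous inequality \emph{positive} $\le 0$ in case (ii).
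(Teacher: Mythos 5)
Your proposal is correct and follows precisely the route the paper intends: the preliminary observation that a coarsely continuous map on a metrically convex space (hence on a Banach space) is Lipschitz for large distances and therefore coarse Lipschitz puts Theorem \ref{mainthm1} into force, and the stated formulas for $\bar{\rho}_{\ell_q}$, $\bar{\rho}_{c_0}$ together with property $(\beta_p)$ of $\ell_p$ then yield both items by elementary power-function comparison (for (i)) and by the vacuity of $\text{positive}\le 0$ (for (ii)). This matches the paper's stated intent of deducing the corollary as an immediate consequence of Theorem \ref{mainthm1}.
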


\section{Quantitative results under coarse homeomorphisms}

This section is devoted to a special case of Theorem \ref{mainthm1} when the coarse quotient map $f$ is a coarse homeomorphism. Recall that a coarsely continuous map $f:X\to Y$ between two metric spaces $X$ and $Y$ is called a coarse homeomorphism if there exists another coarsely continuous map $g:Y\to X$ such that
$$\sup_{x\in X}d_X(g\circ f(x),x)<\infty\hspace{5mm}\text{and}\hspace{5mm}\sup_{y\in Y}d_Y(f\circ g(y),y)<\infty.$$
It was proved in \cite{Zhang} that a coarse homeomorphism is necessarily a coarse quotient map.

The main tool we need is approximate metric midpoint, which was first used by Enflo (unpublished) to show that $L_1$ is not uniformly homeomorphic to $\ell_1$ (see, e.g., \cite{Benyasurvey}). Given two points $x,y$ in a metric space $X$ and $\delta\in (0,1)$, the set of $\delta$-approximate metric midpoints between $x$ and $y$ is defined by
$${\rm Mid}(x,y,\delta):=\left\{z\in X: \max\{d_X(z,x),d_X(z,y)\}\le\frac{1+\delta}{2}d_X(x,y)\right\}.$$
The lemma below is sometimes known as the ``stretching lemma''.

\begin{lemma}[\cite{KaltonLova2008}]
Let $f:X\to Y$ be a coarse Lipschitz map from an unbounded metric space $X$ to a metric space $Y$. If ${\rm Lip}_\infty(f)>0$ then for any $d>0$, any $\varepsilon>0$ and any $0<\delta<1$ there exist $x,y\in X$ with $d_X(x,y)\ge d$ such that
$$f({\rm Mid}(x,y,\delta))\subseteq {\rm Mid}(f(x),f(y),(1+\varepsilon)\delta).$$
\end{lemma}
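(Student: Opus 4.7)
The plan is to find a pair $(x,y)$ in $X$ that is stretched almost to its asymptotic maximum by $f$; any $\delta$-approximate midpoint $z$ of such a pair is then automatically constrained to land near a metric midpoint in $Y$, because the upper Lipschitz bound applied on each half, combined with the nearly extremal separation of the endpoint images, leaves essentially no slack.

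To execute this, write $L := {\rm Lip}_\infty(f) > 0$ and fix a tolerance $\eta > 0$, to be chosen at the end. Since ${\rm Lip}_s(f)$ is non-increasing in $s$ with infimum $L$, pick $s_0 > 0$ so that ${\rm Lip}_{s_0}(f) \le L + \eta$. On the other hand, ${\rm Lip}_t(f) \ge L$ for every $t > 0$, so setting $D := \max\{d,\,2s_0/(1-\delta)\}$ one can select $x,y \in X$ with $d_X(x,y) \ge D$ and $d_Y(f(x),f(y)) \ge (L-\eta)\,d_X(x,y)$. For any $z \in {\rm Mid}(x,y,\delta)$, the triangle inequality forces $d_X(x,z),\, d_X(y,z) \ge \tfrac{1-\delta}{2}\,d_X(x,y) \ge s_0$, so the upper bound applies on both halves:
\[
d_Y(f(x),f(z)) \le (L+\eta)\,d_X(x,z) \le (L+\eta)\cdot\tfrac{1+\delta}{2}\,d_X(x,y),
\]
and symmetrically for $d_Y(f(y),f(z))$. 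Dividing by the lower bound $d_Y(f(x),f(y)) \ge (L-\eta)\,d_X(x,y)$ produces a ratio of at most $\frac{(L+\eta)(1+\delta)}{2(L-\eta)}$. Choosing $\eta$ small enough that $(L+\eta)(1+\delta)/(L-\eta) \le 1+(1+\varepsilon)\delta$ — possible since the left side tends to $1+\delta < 1+(1+\varepsilon)\delta$ as $\eta \to 0$ — places $f(z)$ inside ${\rm Mid}(f(x),f(y),(1+\varepsilon)\delta)$, as required.

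The main obstacle I anticipate is arranging simultaneously the upper control ${\rm Lip}_{s_0}(f) \le L+\eta$ (valid only once distances exceed $s_0$) and a concrete pair whose stretching is at least $L-\eta$ at a separation large enough that both halves of any $\delta$-midpoint configuration still exceed $s_0$. This is resolved by the monotone definition of ${\rm Lip}_\infty(f)$: the inequality ${\rm Lip}_t(f) \ge L$ persists at every scale $t$, so the nearly extremal pair can always be sought beyond the threshold $2s_0/(1-\delta)$, where the upper Lipschitz estimate is simultaneously in force. After these choices are synchronized, the remaining verification is a short chain of inequalities.
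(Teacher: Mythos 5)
Your proof is correct and follows the standard argument from the cited Kalton--Randrianarivony reference (the paper itself does not reproduce a proof of this lemma): pick a pair $(x,y)$ at a sufficiently large scale whose stretch ratio is within $\eta$ of $L={\rm Lip}_\infty(f)$, observe that the triangle inequality pins both $d_X(x,z)$ and $d_X(y,z)$ above $\tfrac{1-\delta}{2}d_X(x,y)\ge s_0$ for any $z\in{\rm Mid}(x,y,\delta)$, apply the upper Lipschitz estimate $L+\eta$ on each half, divide by the near-extremal lower bound on $d_Y(f(x),f(y))$, and let $\eta\to 0$. The logical order of choices (fix $\eta$ from $L,\delta,\varepsilon$ alone; then $s_0$; then $D$; then the pair) is sound, and the use of ${\rm Lip}_t(f)\ge L$ for all $t$ to locate the near-extremal pair at scale $\ge D$ is exactly the right mechanism.
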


The next lemma, which can be found in \cite{LovaPhD}, relates the set of approximate metric midpoints in a Banach space with the moduli of AUC and AUS of the space.

\begin{lemma}[\cite{LovaPhD}]\label{midmoduli}
Let $X$ be a Banach space, $x\in S_X$ and $0<t\le1$.
\begin{enumerate}[(i)]
\item For every $\varepsilon>0$, there exists a finite-codimentional subspace $Y$ of $X$ such that
$$tB_Y\subseteq {\rm Mid}(x,-x,\bar{\rho}_X(t)+\varepsilon).$$
\item If $\bar{\delta}_X(t)>0$, then for every $0<\varepsilon<1$, there exists a compact subset $K$ of $X$ such that $${\rm Mid}(x,-x,(1-\varepsilon)\bar{\delta}_X(t))\subseteq K+3tB_X.$$
\end{enumerate}
\end{lemma}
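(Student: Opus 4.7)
The two parts require different strategies, so I will tackle them separately. For \textbf{Part (i)}, the plan is to exploit the definition of $\bar{\rho}_X(t)$ directly. Given $\varepsilon>0$ and the fixed $x\in S_X$, choose a finite-codimensional subspace $Y$ of $X$ witnessing the infimum up to $\varepsilon$, so that $\sup_{y\in S_Y}\|x+ty\|\le 1+\bar{\rho}_X(t)+\varepsilon$; since $S_Y$ is symmetric under $y\leftrightarrow -y$, the same bound holds with $-ty$ in place of $ty$. For any $v\in tB_Y$, write $v=ty$ with $y\in B_Y$. If $y=0$ the inclusion is trivial; if $y\neq 0$, express $x+ty$ as the convex combination $(1-\|y\|)\,x+\|y\|\,(x+ty/\|y\|)$, and the triangle inequality yields $\|x+v\|\le 1+\bar{\rho}_X(t)+\varepsilon$, and analogously for $\|x-v\|$. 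Because $\|x-(-x)\|=2$, these are exactly the bounds defining membership in ${\rm Mid}(x,-x,\bar{\rho}_X(t)+\varepsilon)$.

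For \textbf{Part (ii)}, the plan is to realize $K$ as the image of the midpoint set under a projection onto a finite-dimensional complement of a subspace on which the AUC inequality at $x$ is nearly saturated. Set $\delta_0:=(1-\varepsilon)\bar{\delta}_X(t)$ and choose $\eta>0$ with $\bar{\delta}_X(t)-\eta>\delta_0$. By the definition of $\bar{\delta}_X(t)$, one obtains a finite-codimensional subspace $W\subseteq X$ such that
$$\inf_{w\in S_W}\|x+tw\|\ge 1+\bar{\delta}_X(t)-\eta>1+\delta_0,$$
and by symmetry of $W$ the same bound holds for $\|x-tw\|$. Observe that $x\notin W$, since otherwise $w=-x$ would give $\|x+t(-x)\|=1-t\le 1<1+\delta_0$. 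Hence there is a finite-dimensional complement $E\ni x$ with $X=E\oplus W$; let $P\colon X\to E$ be the corresponding projection. Since ${\rm Mid}(x,-x,\delta_0)$ is bounded and $E$ is finite-dimensional, the closure $K:=\overline{P({\rm Mid}(x,-x,\delta_0))}$ is compact.

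It now suffices to show that $\|(I-P)z\|\le 3t$ for every $z\in {\rm Mid}(x,-x,\delta_0)$, for then $z=Pz+(I-P)z\in K+3tB_X$. This last step is the \emph{main obstacle}. Assuming for contradiction $r:=\|(I-P)z\|>3t$ and setting $w:=(I-P)z/r\in S_W$, the AUC bound gives $\|x\pm tw\|\ge 1+\bar{\delta}_X(t)-\eta>1+\delta_0$, which by homogeneity of the norm scales to $\|(r/t)x\pm rw\|>(r/t)(1+\delta_0)$. The plan is to play this against the midpoint bounds $\|Pz+rw\pm x\|=\|z\pm x\|\le 1+\delta_0$ via the triangle inequality, invoking also the monotonicity of $\lambda\mapsto\max(\|\lambda w+x\|,\|\lambda w-x\|)$ from the proof of Proposition \ref{AUSdef} to bridge the difference in scaling. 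The delicate part is choosing the right linear combination of these constraints so that the gap $r/t>3$ produces an inconsistency from a single AUC direction, rather than requiring a further sequence-of-midpoints extraction argument.
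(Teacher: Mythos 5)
Part (i) is fine and essentially self-contained: fixing a near-optimal finite-codimensional $Y$, the convex-combination estimate $\|x+ty\|\le(1-\|y\|)\cdot 1+\|y\|\cdot\|x+ty/\|y\|\|\le 1+\bar\rho_X(t)+\varepsilon$ for $y\in B_Y$, together with the symmetry of $S_Y$, gives exactly the membership in ${\rm Mid}(x,-x,\bar\rho_X(t)+\varepsilon)$. (Note the paper only cites this lemma from \cite{LovaPhD} and gives no proof, so there is no in-text argument to compare against; what follows is a correctness assessment.)

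Part (ii) has a genuine gap, which you yourself flag as the ``main obstacle,'' and I do not see how to close it along the lines you sketch. The claim that $\|(I-P)z\|\le 3t$ for \emph{every} individual $z\in{\rm Mid}(x,-x,\delta_0)$ is not forced by your hypotheses: the AUC estimate $\|x\pm tw\|>1+\delta_0$ ($w\in S_W$) is an estimate anchored at the point $x$, whereas a midpoint $z$ decomposes as $z=Pz+(I-P)z$ with $Pz\in E$ a priori unrelated to $\pm x$, and $\|Pz\|$ is controlled only through $\|P\|\,\|z\|$, where $\|P\|$ depends on the (unquantified) choice of complement. Every attempt to transfer the estimate at $x$ to the direction $w=(I-P)z/\|(I-P)z\|$ by triangle inequality produces an error term of size $\frac{t}{\|(I-P)z\|}\|Pz\|$, and there is no reason for $\|Pz\|$ to be small compared to $\bar\delta_X(t)$. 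What actually works is a sequential argument that makes the projection onto $E$ play a different role. Suppose ${\rm Mid}(x,-x,\delta_0)$ is not contained in $K+3tB_X$ for any compact (equivalently, finite) $K$; then a standard maximality argument produces an infinite $3t$-separated sequence $\{z_n\}$ in ${\rm Mid}(x,-x,\delta_0)$. Choose $W$ and $E$ as you do and pass to a subsequence along which $Pz_n$ converges in the finite-dimensional space $E$. The key structural observation you are missing is that the midpoint set is convex and symmetric, so that $\frac{1}{2}(z_n-z_m)\in{\rm Mid}(x,-x,\delta_0)$ for all $n,m$; on the other hand $\frac{1}{2}(z_n-z_m)=\frac{1}{2}(Pz_n-Pz_m)+\frac{1}{2}(I-P)(z_n-z_m)$, where for large $n\neq m$ the first term is arbitrarily small and the second lies in $W$ with norm at least $\frac{1}{2}(3t-o(1))\ge t$. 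Writing that second term as $sw$ with $w\in S_W$ and $s\ge t$, the monotonicity of $s\mapsto\max\{\|x+sw\|,\|x-sw\|\}$ (already established in the paper's proof of Proposition~\ref{AUSdef}) combined with $\|x\pm tw\|>1+\bar\delta_X(t)-\eta$ contradicts $\bigl\|\frac{1}{2}(z_n-z_m)\pm x\bigr\|\le 1+\delta_0$ once $\eta$ and the $o(1)$ terms are small. So the compact set $K$ should be a finite maximal $3t$-net, not the closure of $P({\rm Mid})$, and the contradiction is extracted from pairs $(z_n,z_m)$ rather than from a single point.
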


We also need the following easy lemma.

\begin{lemma}\label{Goreliklike}
Let $f: X\to Y$ be a map between Banach spaces $X$ and $Y$. If there exist a closed ball $B_r$ of radius $r$ in $X$, a closed ball $B_s$ of radius $s$ in $Y$ and a compact set $K\subseteq Y$ such that $$f(B_r)\subseteq K+B_s,$$ then the compression modulus $\varphi_f$ of $f$ satisfies $$\varphi_f(r):=\inf\{\|f(x)-f(y)\|: \|x-y\|\ge r\}\le 2s.$$
\end{lemma}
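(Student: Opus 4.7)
The plan is to reduce the statement to a pigeonhole argument that exploits two opposing phenomena: on the input side, the ball $B_r$ in the infinite-dimensional space $X$ contains plenty of $r$-separated points, while on the output side, $K+B_s$ is only mildly ``larger'' than the compact set $K$. Since $K$ is compact it can be covered by finitely many small balls, so any infinite $r$-separated sequence in $B_r$ must send at least two of its members into nearly the same piece of $K$, producing two points in $B_r$ whose images differ by at most $2s$ plus an arbitrarily small slack.

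Concretely, let $B_r=B(x_0,r)$. Using the global standing assumption that $X$ is infinite-dimensional, I would invoke Kottman's theorem to produce a sequence $(u_n)\subseteq S_X$ with $\|u_n-u_m\|\ge 1$ whenever $n\ne m$, and set $x_n:=x_0+ru_n\in B_r$. By construction $\|x_n-x_m\|\ge r$ for all distinct $n,m$, so each such pair is eligible to compete in the infimum defining $\varphi_f(r)$.

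Next, the hypothesis $f(B_r)\subseteq K+B_s$ allows me to write $f(x_n)=k_n+b_n$ with $k_n\in K$ and $\|b_n\|\le s$. Compactness of $K$ forces $(k_n)$ to have a Cauchy subsequence, so for any $\varepsilon>0$ I can extract indices $n\ne m$ with $\|k_n-k_m\|\le\varepsilon$. The triangle inequality then gives
\[
\|f(x_n)-f(x_m)\|\le\|k_n-k_m\|+\|b_n\|+\|b_m\|\le 2s+\varepsilon,
\]
and since $\|x_n-x_m\|\ge r$ this already shows $\varphi_f(r)\le 2s+\varepsilon$. Letting $\varepsilon\to 0$ yields the claimed bound $\varphi_f(r)\le 2s$.

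The only mildly delicate ingredient is producing a sequence in $B_r$ whose pairwise distances are at least (and not merely asymptotically) $r$: the plain Riesz lemma alone only supplies separation $1-\eta$ on $S_X$, which would give $\varphi_f(r-\eta')\le 2s+\varepsilon$ for small $\eta',\varepsilon>0$, and since $\varphi_f$ is non-decreasing this does not recover the bound for $\varphi_f(r)$ itself. Kottman's theorem (or equivalently Elton--Odell, since we only need separation $\ge 1$) plugs exactly this gap, and is the only step where infinite-dimensionality of $X$ is used essentially; the remainder of the argument is a routine pigeonhole over the compact set $K$.
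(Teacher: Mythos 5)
Your argument is correct and essentially identical to the paper's proof: choose an $r$-separating sequence in $B_r$, decompose each image as a point of $K$ plus a perturbation of norm at most $s$, use compactness of $K$ to pass to a subsequence whose $K$-components are $\varepsilon$-close, and conclude via the triangle inequality and $\varepsilon\to 0$. The only difference is that you explicitly justify the existence of an $r$-separating sequence in $B_r$ via Kottman's theorem, whereas the paper simply asserts it (tacitly relying on the standing infinite-dimensionality convention); this is a fair point of care, though a Riesz-type iterated construction giving separation exactly $1$ on the sphere would already suffice.
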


\begin{proof}
Choose a $r$-separating sequence $\{x_n\}_{n=1}^\infty$ in $B_r$ and let $f(x_n)=z_n+y_n$, where $z_n\in K$ and $y_n\in B_s$ for all $n\in\mathbb{N}$. For $\varepsilon>0$, since $K$ is compact, by passing to a subsequence we may assume that $\|z_n-z_m\|<\varepsilon$ for all $m,n\in\mathbb{N}$. Then for $m\neq n$,
$$2s\ge\|y_n-y_m\|\ge\|f(x_n)-f(x_m)\|-\|z_n-z_m\|\ge\varphi_f(r)-\varepsilon,$$
and we are done by letting $\varepsilon\to 0$.
\end{proof}

Theorem \ref{cLipLova} below is due to Randrianarivony \cite{LovaPhD}. We present here a proof with improved constants.

\begin{theorem}\label{cLipLova}
Let $X$ and $Y$ be two Banach spaces and $f:X\to Y$ a coarse Lipschitz embedding, i.e., there exist $d\ge0$ and $L,C>0$ such that for all $x,y\in X$ with $\|x-y\|\ge d$, $$\frac{1}{C}\|x-y\|\le\|f(x)-f(y)\|\le L\|x-y\|.$$
Then for all $0<t\le1$, $$\bar{\delta}_Y\left(\frac{t}{7LC}\right)\le\bar{\rho}_X(t).$$
\end{theorem}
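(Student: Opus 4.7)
The plan is to argue by contradiction, combining the stretching lemma with both parts of Lemma~\ref{midmoduli} and the compression bound from Lemma~\ref{Goreliklike}. Suppose, for some $t\in(0,1]$, that $\bar{\delta}_Y(\tau)>\bar{\rho}_X(t)$ with $\tau:=t/(7LC)$. Since $f$ is a coarse Lipschitz embedding, the lower bound $\|f(x)-f(y)\|\ge\|x-y\|/C$ for large distances forces $\mathrm{Lip}_\infty(f)\ge 1/C>0$, while the upper bound gives $\mathrm{Lip}_\infty(f)\le L$, so the stretching lemma is applicable. Fix a small $\varepsilon>0$ and a large threshold $d'$ (both to be optimized). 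The stretching lemma produces $x,y\in X$ with $\|x-y\|\ge d'$ satisfying
\[
 f\bigl(\mathrm{Mid}(x,y,\bar{\rho}_X(t)+\varepsilon)\bigr)\subseteq \mathrm{Mid}\bigl(f(x),f(y),(1+\varepsilon)(\bar{\rho}_X(t)+\varepsilon)\bigr).
\]

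Next I would transport Lemma~\ref{midmoduli} to non-centered midpoint sets by shifting by $(x+y)/2$ and rescaling by $\|x-y\|/2$. Writing $r:=\|x-y\|/2$ and $R:=\|f(x)-f(y)\|$, part (i) yields a finite-codimensional subspace $W\subseteq X$ with $\tfrac{x+y}{2}+tr\,B_W\subseteq \mathrm{Mid}(x,y,\bar{\rho}_X(t)+\varepsilon)$, while part (ii), applied in $Y$ at the argument $\tau$, yields a compact set $K'\subseteq Y$ with $\mathrm{Mid}(f(x),f(y),(1-\varepsilon')\bar{\delta}_Y(\tau))\subseteq K'+\tfrac{3\tau R}{2}B_Y$. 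The hypothesis $\bar{\delta}_Y(\tau)>\bar{\rho}_X(t)$ lets me choose $\varepsilon,\varepsilon'$ small enough that $(1+\varepsilon)(\bar{\rho}_X(t)+\varepsilon)\le(1-\varepsilon')\bar{\delta}_Y(\tau)$, and chaining the three inclusions gives
\[
 f\bigl(\tfrac{x+y}{2}+tr\,B_W\bigr)\subseteq K'+\tfrac{3\tau R}{2}B_Y.
\]

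For the last step, since $W$ is infinite-dimensional I would select, via a Mazur-type construction, a basic sequence in $S_W$ with basis constant arbitrarily close to $1$; appropriately scaled and translated, this yields points $z_n\in\tfrac{x+y}{2}+tr\,B_W$ with pairwise separation at least $tr(1-\eta)$ for any preset $\eta>0$. Enlarging $d'$ so that $tr(1-\eta)\ge d$ activates the coarse embedding's lower bound, giving $\|f(z_n)-f(z_m)\|\ge tr(1-\eta)/C$. On the other hand, passing to a subsequence on which the $K'$-components cluster to within $o(1)$, the argument of Lemma~\ref{Goreliklike} forces $\|f(z_n)-f(z_m)\|\le 3\tau R+o(1)\le 6\tau L r+o(1)$, using $R\le 2Lr$. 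Dividing by $r$ and sending $\varepsilon,\varepsilon',\eta\to 0$ and $r\to\infty$ yields $\tau\ge t/(6LC)$, which contradicts $\tau=t/(7LC)$ since $1/6>1/7$.

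The main obstacle is producing the near-maximal separation $tr(1-\eta)$ inside $trB_W$: this is what causes the clean exponent $1/(6LC)$ to emerge and thereby licenses the (slightly suboptimal but cleaner) constant $1/(7LC)$ in the statement. Beyond that, the delicate parts are the simultaneous shrinking of $\varepsilon,\varepsilon',\eta$ against the growing parameters $d',r$ so that every error term is $o(1)$ when divided by $r$, and the careful translation of the centered-at-$\pm x$ formulations of Lemma~\ref{midmoduli} to the non-centered pairs $\{x,y\}$ and $\{f(x),f(y)\}$ without losing the factors $t$ and $3\tau/2$.
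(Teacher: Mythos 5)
Your argument is correct and follows essentially the same route as the paper: contradiction hypothesis, stretching lemma, Lemma~\ref{midmoduli}(i) to insert a ball of a finite-codimensional subspace into the domain-side midpoint set, Lemma~\ref{midmoduli}(ii) to cover the range-side midpoint set by ``compact plus small ball'', and a Gorelik-type compression bound to contradict the lower Lipschitz estimate. The only organizational differences are cosmetic: the paper applies the stretching lemma at the level $\delta=(1-2\varepsilon)\bar{\delta}_Y(t/7LC)$ and deduces $(1-2\varepsilon)\bar{\delta}_Y\le\bar{\rho}_X(t)$ directly, whereas you apply it at $\delta=\bar{\rho}_X(t)+\varepsilon$ and match the two levels via the contradiction hypothesis; and you re-derive the compression argument in place of invoking Lemma~\ref{Goreliklike} as a black box (the paper states it exactly as $\varphi_f(r)\le 2s$). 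One small remark: the separation you worry about is not actually a delicate point. Lemma~\ref{Goreliklike} already builds in the standard fact that every infinite-dimensional Banach space contains an $r$-separated sequence in $B_r$ (no $(1-\eta)$ loss is needed), so invoking that lemma directly eliminates the extra parameter $\eta$. Also, the final limit $r\to\infty$ is superfluous: once $r$ is large enough to engage the lower Lipschitz bound, the clustering error in the Gorelik argument is independent of $r$ and can simply be sent to zero. The resulting numerical contradiction $1/2\le 3/7$ (equivalently $\tau\ge t/6LC$ versus $\tau=t/7LC$) is the same in both accounts.
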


\begin{proof}
Let $0<t\le1$ be such that $\bar{\delta}_Y(t/7LC)>0$ and $0<\varepsilon<1/2$. Note that $1/C\le{\rm Lip}_\infty(f)\le L$, one can apply the stretching lemma to find $u,v\in X$ with $\|u-v\|\ge2d/t$ such that
$$f\left({\rm Mid}\left(u,v,(1-2\varepsilon)\bar{\delta}_Y\left(\frac{t}{7LC}\right)\right)\right)\subseteq
{\rm Mid}\left(f(u),f(v),(1-\varepsilon)\bar{\delta}_Y\left(\frac{t}{7LC}\right)\right).$$
By Lemma \ref{midmoduli} (ii), there exists a compact set $K\subseteq Y$ such that
\begin{align*}
{\rm Mid}\left(f(u),f(v),(1-\varepsilon)\bar{\delta}_Y\left(\frac{t}{7LC}\right)\right)&\subseteq K+\frac{3t}{14LC}\|f(u)-f(v)\|B_Y\\
&\subseteq K+\frac{3t}{14C}\|u-v\|B_Y.
\end{align*}
Assume that there exists $\tau>0$ that saisfies $$(1-2\varepsilon)\bar{\delta}_Y\left(\frac{t}{7LC}\right)>\bar{\rho}_X(t)+\tau,$$
then by Lemma \ref{midmoduli} (i), there exists a finite-codimensional subspace $Z$ of $X$ such that
$$f\left({\rm Mid}\left(u,v,(1-2\varepsilon)\bar{\delta}_Y\left(\frac{t}{7LC}\right)\right)\right)\supseteq
f\left(\frac{u+v}{2}+\frac{t\|u-v\|}{2}B_Z\right),$$
thus $$f\left(\frac{u+v}{2}+\frac{t\|u-v\|}{2}B_Z\right)\subseteq K+\frac{3t}{14C}\|u-v\|B_Y.$$
Now it follows from Lemma \ref{Goreliklike} that
$$\frac{t\|u-v\|}{2C}\le\varphi_f\left(\frac{t\|u-v\|}{2}\right)\le\frac{3t}{7C}\|u-v\|,$$
a contradiction. Therefore, we must have
$$(1-2\varepsilon)\bar{\delta}_Y\left(\frac{t}{7LC}\right)\le\bar{\rho}_X(t).$$
We then finish the proof by letting $\varepsilon\to0$.
\end{proof}

\begin{theorem}\label{mainthm2}
Let $X,Y$ be two Banach spaces. $S$ is a subset of $X$ and $f:S\to Y$ is a coarse homeomorphism that is coarse Lipschitz. Then for all $0<t\le 1$, $$\bar{\delta}_X\left(\frac{t}{56{\rm Lip}_\infty(f)c_\infty(f)}\right)\le\bar{\rho}_Y(t).$$
\end{theorem}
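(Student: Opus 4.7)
The plan is to reduce the statement to Theorem \ref{cLipLova} by producing, from the coarse homeomorphism $f$, an honest coarse Lipschitz embedding $\tilde g : Y \to X$. To construct $\tilde g$, I fix a large $d > K$ so that $c_d$ is close to $c_\infty(f)$ in Lemma \ref{cqLiplarge}, choose a maximal $d$-separated net $\{y_\alpha\}$ in $Y$, and for each $\alpha$ pick $x_\alpha \in S$ with $\|f(x_\alpha) - y_\alpha\| \le K$ (possible by co-coarse continuity). Set $\tilde g(y) = x_{\alpha(y)}$ where $y_{\alpha(y)}$ is any net-point within $d$ of $y$. To check that $\tilde g$ is coarse Lipschitz, take $y, y' \in Y$ with $\|y - y'\|$ large. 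Applying Lemma \ref{cqLiplarge} anchored at $x_{\alpha(y)}$ produces $z \in B_S(x_{\alpha(y)},\, c_d(\|y - y'\| + O(1)))$ with $\|f(z) - f(x_{\alpha(y')})\| \le K$. The coarse homeomorphism hypothesis supplies a coarsely continuous approximate inverse $g_0 : Y \to S$, and using $\sup_x \|g_0(f(x)) - x\| < \infty$ together with $\omega_{g_0}(K) < \infty$ one gets $\|z - x_{\alpha(y')}\| = O(1)$. Hence $\text{Lip}_\infty(\tilde g) \le c_\infty(f) + \varepsilon$ for every $\varepsilon > 0$.

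For the lower Lipschitz bound on $\tilde g$, use that $f$ is coarse Lipschitz and that $\|f(\tilde g(y)) - y\| \le K + d$: for $\|\tilde g(y) - \tilde g(y')\| \ge s$ with $s$ large,
$$\|y - y'\| \le \|f(\tilde g(y)) - f(\tilde g(y'))\| + 2(K + d) \le \text{Lip}_s(f)\,\|\tilde g(y) - \tilde g(y')\| + O(1),$$
so $\tilde g$ has lower Lipschitz constant close to $1/\text{Lip}_\infty(f)$. With $\tilde g : Y \to X$ now a coarse Lipschitz embedding whose upper constant is close to $c_\infty(f)$ and whose reciprocal lower constant is close to $\text{Lip}_\infty(f)$, Theorem \ref{cLipLova} yields $\bar{\delta}_X(t/(7 L_{\tilde g} C_{\tilde g})) \le \bar{\rho}_Y(t)$. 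Tracking the slack factors (namely $\text{Lip}_s(f) < 2\,\text{Lip}_\infty(f)$, $c_d$ close to $c_\infty(f)$, and the additive constants in the net construction) converts $7 L_{\tilde g} C_{\tilde g}$ into the stated $56\,\text{Lip}_\infty(f)\,c_\infty(f)$.

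The main obstacle is the upper Lipschitz control of $\tilde g$: co-coarse continuity alone only gives $\|f(z) - f(x_{\alpha(y')})\| \le K$, which does not bound $\|z - x_{\alpha(y')}\|$ in $X$. It is precisely the coarse homeomorphism hypothesis, via the coarsely continuous approximate inverse $g_0$, that converts an $f$-distance bound into an $X$-distance bound. This is the extra ingredient that distinguishes Theorem \ref{mainthm2} from Theorem \ref{mainthm1}, where coarse quotient alone suffices but yields only the weaker $(\beta)$-modulus inequality.
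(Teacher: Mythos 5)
Your strategy — manufacture a coarse Lipschitz embedding of $Y$ into $X$ and invoke Theorem \ref{cLipLova} — is exactly the paper's, and you have also put your finger on the decisive step: the coarsely continuous approximate inverse is what converts a $K$-bound in $Y$ into an $O(1)$-bound in $X$, which is what makes the upper Lipschitz control possible. The two ways your proposal deviates from the paper are worth noting. First, the intermediate net-based map $\tilde g$ is an unnecessary detour: the approximate inverse $g:Y\to S$ given by the coarse homeomorphism hypothesis \emph{is} the desired coarse Lipschitz embedding, and the paper applies your exact same argument — Lemma \ref{cqLiplarge} anchored at $g(y_2)$, then $g$ and $\sup_x\|g\circ f(x)-x\|<\infty$ to bound the distance in $X$ — directly to $g$, with no net in sight; your $\tilde g$ lies within bounded distance of $g$ and so carries no extra information. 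Second, the constant bookkeeping is asserted rather than carried out, and what you assert is actually \emph{inconsistent} with the claimed factor of $56$: if ${\rm Lip}_\infty(\tilde g)\le c_\infty(f)+\varepsilon$ and the reciprocal lower Lipschitz constant approaches ${\rm Lip}_\infty(f)$, then $7L_{\tilde g}C_{\tilde g}\to 7\,c_\infty(f){\rm Lip}_\infty(f)$, which would give an eightfold improvement over the stated theorem. The factor of $8$ in the paper is there deliberately: it uses the crude slacks ${\rm Lip}_s(f)<2\,{\rm Lip}_\infty(f)$ and $c_{d-K}<2c_\infty(f)$, and then absorbs the additive errors ($\omega_g(K)+M$ and the $K$-terms) into a fraction of the leading term by taking $d$ large, arriving at $L=\tfrac{8}{3}c_\infty(f)$ and $C=3\,{\rm Lip}_\infty(f)$, so $7LC=56\,c_\infty(f){\rm Lip}_\infty(f)$. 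This is done precisely to avoid the delicate limiting argument in $\varepsilon$ (one cannot let $\varepsilon\to0$ after applying Theorem \ref{cLipLova} without some regularity of $\bar{\delta}_X$, since the argument of $\bar{\delta}_X$ \emph{increases} as $\varepsilon\to0$). So: your idea is right and essentially matches the paper, but you should drop the net construction and either carry out the paper's slacked estimate honestly to reach $56$, or justify why a sharper constant is legitimate.
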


\begin{proof}
Let $g:Y\to S$ be a coarsely continuous map such that
$$\sup_{x\in S}d_X(g\circ f(x),x):=M<\infty\hspace{4mm}\text{and}\hspace{4mm}\sup_{y\in Y}d_Y(f\circ g(y),y):=K<\infty.$$
We claim that $g$ is a coarse Lipschitz embedding from $Y$ into $X$.
Indeed, it follows from Proposition 2.5 in \cite{Zhang} that $f$ is a coarse quotient map with constant $K$. Choose $s>2K$ such that
${\rm Lip}_s(f)<2{\rm Lip}_\infty(f)$, and let $d>6K$ be such that $\varphi_g(d)>s$. For $y_1,y_2\in Y$ with $\|y_1-y_2\|\ge d$, one has $\|g(y_1)-g(y_2)\|\ge\varphi_g(d)>s$ and thus
$$\|f\circ g(y_1)-f\circ g(y_2)\|\le2{\rm Lip}_\infty(f)\|g(y_1)-g(y_2)\|.$$
By the triangle inequality,
$$\|f\circ g(y_1)-f\circ g(y_2)\|\ge\|y_1-y_2\|-2K\ge\frac{2}{3}\|y_1-y_2\|,$$
so we obtain that $$\frac{1}{3{\rm Lip}_\infty(f)}\|y_1-y_2\|\le\|g(y_1)-g(y_2)\|.$$
On the other hand, we could make $d$ even larger so that $c_{d-K}<2c_\infty(f)$ and $$\frac{d}{3}\cdot c_\infty(f)>\omega_g(K)+M.$$
Note that
$$\|y_1-y_2\|+K\ge r:=\|y_1-f\circ g(y_2)\|\ge\|y_1-y_2\|-K\ge d-K>K,$$
it follows from Lemma \ref{cqLiplarge} and the definition of $c_{d-K}$ that
$$y_1\in B_Y(f\circ g(y_2),r)\subseteq f(B_S(g(y_2),2rc_\infty(f)))^K,$$
so there exists $x\in S$ such that
$$\|x-g(y_2)\|\le2rc_\infty(f)\hspace{5mm}\text{and}\hspace{5mm}\|y_1-f(x)\|\le K.$$
Now again by the triangle inequality,
\begin{align*}
\|g(y_1)-g(y_2)\|&\le\|g(y_1)-g\circ f(x)\|+\|g\circ f(x)-x\|+\|x-g(y_2)\|\\
&\le\omega_g(K)+M+2rc_\infty(f)\\
&\le\frac{c_\infty(f)}{3}\|y_1-y_2\|+2c_\infty(f)(\|y_1-y_2\|+K)\\
&\le\frac{8c_\infty(f)}{3}\|y_1-y_2\|.
\end{align*}
Therefore, for sufficiently large $d$, one has
$$\frac{1}{3{\rm Lip}_\infty(f)}\|y_1-y_2\|\le\|g(y_1)-g(y_2)\|\le\frac{8c_\infty(f)}{3}\|y_1-y_2\|$$
whenever $\|y_1-y_2\|\ge d$. The desired inequality then follows from Theorem \ref{cLipLova}.
\end{proof}

\begin{remark}
In connection with the modulus of asymptotic uniform convexity, the asymptotic midpoint uniform convexity modulus of a Banach space $X$ was introduced in \cite{DKRRZ2016} as follows:
$$\tilde{\delta}_X(t):=\inf_{x\in S_X}\sup_{{\rm dim}(X/Y)<\infty}\inf_{y\in S_Y}\max\{\|x+ty\|,\|x-ty\|\}-1.$$
A Banach space $X$ is said to be asymptotically midpoint uniformly convex (AMUC for short) if $\tilde{\delta}_X(t)>0$ for all $0<t\le1$. It was implicitly proved in \cite{DKRRZ2016} that Lemma \ref{midmoduli} (ii) still holds true for the AMUC modulus. Therefore, Theorem \ref{cLipLova} and Theorem \ref{mainthm2} can be strengthened by replacing $\bar{\delta}_X$ with $\tilde{\delta}_X$.
\end{remark}

\begin{corollary}\mbox{}\par
\begin{enumerate}[(i)]
\item $\ell_q$ does not coarse Lipschitz embed into $\ell_p$ for $1<p<q<\infty$.
\item $c_0$ does not coarse Lipschitz embed into any AMUC Banach space.
\end{enumerate}
\end{corollary}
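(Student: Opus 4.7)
The plan is to derive both statements by contradiction directly from Theorem \ref{cLipLova}, with part (ii) requiring the asymptotic midpoint uniform convexity refinement pointed out in the remark following Theorem \ref{mainthm2}. In each case I would assume a hypothetical coarse Lipschitz embedding $f$ exists with large-distance constants $L,C>0$ (so $\tfrac{1}{C}\|x-y\|\le\|f(x)-f(y)\|\le L\|x-y\|$ for $\|x-y\|\ge d$), and then read off a contradiction from the explicit asymptotic moduli of the spaces involved.

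For (i) I take $X=\ell_q$ and $Y=\ell_p$ with $1<p<q<\infty$. Theorem \ref{cLipLova} yields
$$\bar\delta_{\ell_p}\!\left(\tfrac{t}{7LC}\right)\le\bar\rho_{\ell_q}(t)\quad\text{for all }0<t\le 1.$$
Plugging in the formulas $\bar\delta_{\ell_p}(s)=(1+s^p)^{1/p}-1$ and $\bar\rho_{\ell_q}(t)=(1+t^q)^{1/q}-1$ listed right after Theorem \ref{mainthm1}, the left side behaves like a positive multiple of $t^p$ as $t\to 0^+$, while the right side behaves like a positive multiple of $t^q$. Since $p<q$, dividing by $t^p$ and letting $t\to 0^+$ produces a contradiction.

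For (ii) I take $X=c_0$ and let $Y$ be any AMUC Banach space. Because $\bar\rho_{c_0}\equiv 0$, the original Theorem \ref{cLipLova} would only give $\bar\delta_Y(t/(7LC))\le 0$, which alone is not enough under the weaker AMUC hypothesis. To fix this I would invoke the strengthening $\tilde\delta_Y(t/(7LC))\le\bar\rho_X(t)$ stated in the remark after Theorem \ref{mainthm2}, which forces $\tilde\delta_Y(t/(7LC))=0$ for all $0<t\le 1$ and directly contradicts the assumption $\tilde\delta_Y(s)>0$.

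All the substantive work was done upstream — the stretching lemma, Lemma \ref{midmoduli}, Lemma \ref{Goreliklike}, and Theorem \ref{cLipLova} together with its AMUC analogue — so the corollary itself is an immediate plug-in. I anticipate no genuine obstacle; the only point that requires care is using the strengthened form of Theorem \ref{cLipLova} for part (ii) rather than the version stated explicitly, since the naive application degenerates when the target modulus $\bar\rho_{c_0}$ vanishes identically.
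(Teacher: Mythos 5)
Your proposal is correct and matches the paper's intended argument: the corollary is a direct plug-in to Theorem \ref{cLipLova}, with the explicit $\ell_p$ moduli giving the $t^p$ versus $t^q$ contradiction in (i), and the AMUC strengthening $\tilde\delta_Y(t/(7LC))\le\bar\rho_{c_0}(t)=0$ from the preceding remark handling (ii). You correctly identified the one delicate point, namely that the unstrengthened theorem with $\bar\delta$ would only rule out AUC targets, so the passage to $\tilde\delta$ is essential for the stated generality of (ii).
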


\noindent
{\bf Acknowledgement.} The author thanks William B. Johnson for the proof of Proposition \ref{AUSdef}.

\bibliographystyle{amsplain}

\end{document}